\documentclass[preprint,11pt,sort&compress,numbers,draft]{elsarticle}

\usepackage{amsmath,amsthm,amsfonts,amssymb,latexsym,mathrsfs,color,cases,url,enumerate}
\usepackage{tikz}

\newtheorem{thm}{Theorem}[section]
\newtheorem{lem}[thm]{Lemma}
\newtheorem{coro}[thm]{Corollary}

\newtheorem{conj}[thm]{Conjecture}

\theoremstyle{definition}

\newtheorem{rem}[thm]{Remark}

\newcommand{\RAC}{\mathbin{\boxplus^{m}_{n}}}

\numberwithin{equation}{section}

\begin{document}
	
	\begin{frontmatter}
		
		\title{The Narayana transformation}
		\author{Jianxi Mao\corref{cor1}}
		\ead{maojx@dlut.edu.cn}
		\cortext[cor1]{Corresponding author.}
		\author{Lijie Wang\corref{cor2}}
		\ead{lijiewang26@hotmail.com}
		\address{School of Mathematical Sciences, Dalian University of Technology, Dalian 116024, P.R. China}
		\date{}
		\begin{abstract}
		For $m\in\mathbb{Z}_{\geq 0}$, let
		\[
		N_{n,m}(x)={}_2F_1(-n,-n-m;m+1;x),
		\]
		which specializes to the Narayana polynomials of types $B$ and $A$ for
		$m=0$ and $m=1$, respectively. We prove that the associated basis
		transformation
		\[
		T_{N_m}\left(\sum_{k=0}^n a_kx^k\right)
		=\sum_{k=0}^n a_kN_{k,m}(x)
		\]
		maps every real-rooted polynomial with nonnegative coefficients to a
		real-rooted polynomial. The proof is based on
		the rectangular additive convolution of polynomials. We then apply
		this result to products of lower triangular matrices and obtain a
		general criterion ensuring that their row generating functions
		remain real-rooted. As consequences, we recover
		this property for powers and products of several classical triangular
		matrices, including Pascal's triangle, the Stirling triangles, and the
		Narayana triangles of types $A$ and $B$. We conclude with conjectures
		concerning the squares of the Eulerian and Delannoy triangles.
		\end{abstract}
		\begin{keyword}
			Real-rooted polynomials \sep Narayana polynomials \sep
			Eulerian polynomials\sep Recurrence relations
			\MSC[2020] 05A20\sep 05A15\sep 26C10
		\end{keyword}
		
	\end{frontmatter}
	
	\section{Introduction}
	Real-rooted polynomials frequently arise in combinatorics~\cite{Bra15,Bre89,CYZ,LW07,LY25,YZ17,YZ22} and many other branches of mathematics~\cite{Hua19, MSS15, MSS16, PS76}.
	Since our interest is combinatorial, we are mainly
	concerned with polynomials whose coefficients are nonnegative.
	Liu and Wang~\cite{LW07} presented criteria for proving the real-rootedness of polynomial sequences by recurrence relations. 
	Using these criteria, many well-known combinatorial polynomials are proved to be real-rooted in a unified approach,
	including the row generating functions (RGFs, for short) of the Eulerian triangle, the Narayana triangle, two kinds of Stirling triangles and the Delannoy triangle.
	Another important way to study real-rooted polynomials is to use linear transformations~\cite{Bra07}.
	In this paper, we focus on polynomial basis transformations.
	
	Suppose that $P=\{P_n(x)\}_{n\ge 0}$ is a polynomial sequence with $\deg(P_n(x))=n$ and
	$$
	P_n(x)=\sum_{k=0}^n P(n,k) x^k.
	$$
	Let $\mathbb{R}[x]$ and $\mathbb{R}_{\geq0}[x]$ denote,
	respectively, the sets of polynomials with real coefficients
	and with nonnegative real coefficients. Define the linear
	transformation
	\[
	\mathcal{T}_P:\mathbb{R}[x]\longrightarrow\mathbb{R}[x]
	\]
	by
	\[
	\mathcal{T}_P(x^n)=P_n(x),\qquad n\geq0,
	\]
	and consider its restriction to $\mathbb{R}_{\geq0}[x]$.
	Brenti~\cite{Bre89}  proved that several basis transformations preserve real-rootedness,
    including the cases in which the entries $P(n,k)$ are the unsigned Stirling numbers of the first kind or the Stirling numbers of the second kind. 
	Furthermore, he proposed a conjecture that the Eulerian transformation (i.e., the entries $P(n,k)$ are the Eulerian numbers) preserves real-rootedness.
	Br\"and\'en and Jochemko~\cite{BJ22} studied the Eulerian transformation and disproved this conjecture. 
	Recently, Athanasiadis~\cite{Ath25} presented a criterion under which the polynomial obtained by applying the Eulerian transformation has only real nonpositive roots,
	settling a conjecture posed by Br\"and\'en and Jochemko~\cite{BJ22}.

	The main objective of this paper is to study the Narayana transformation.
	Let 
	\begin{equation}\label{NAB}
		N_A(n,k)
		=\frac{1}{n+1}\binom{n+1}{k}\binom{n+1}{k+1}\quad \textrm{and}\quad
		N_B(n,k)
		=\binom{n}{k}^2,
	\end{equation}
	which are commonly known as the Narayana numbers of types A and B, respectively.
	The $n$-th Narayana polynomials of types A and B are
	\begin{equation*}
		N^A_n(x)=\sum_{k=0}^{n}N_A(n,k)x^k \quad \textrm{and}\quad  N^B_n(x)=\sum_{k=0}^{n}N_B(n,k)x^k
	\end{equation*}
	respectively.
For $m\in \mathbb{Z}_{\ge 0}=\left\{0,1,2,\ldots\right\}$, define
\begin{equation}\label{eqf}
	N_{n,m}(x)={}_2F_1(-n, -n-m;m+1; x):= \sum_{k=0}^{n} \frac{(-n)_k (-n-m)_k}{(m+1)_k} \frac{x^k}{k!}
\end{equation}
where ${}_2F_1$ denotes the Gauss hypergeometric function and
$$
(\alpha)_k = 
\begin{cases}
    	\alpha(\alpha+1)\cdots(\alpha+k-1), & k \in \mathbb{N},\\
    	1, & k = 0,
\end{cases}
$$
is the Pochhammer symbol.
Clearly, by~\eqref{NAB},
$
N^A_n(x)=N_{n,1}(x)$
and 
$
N^B_n(x)=N_{n,0}(x).
$
Define the basis transformation
$$
\mathcal{T}_{N_{m}}(x^n)=N_{n,m}(x).
$$

   	\begin{thm}\label{RZNARAYANA}
Let
$m\in \mathbb{Z}_{\ge 0}.$
 Suppose that
   	$
   	p(x)=\sum_{k=0}^{n}a_kx^k\in\mathbb{R}_{\ge 0}[x]
   	$
   	has only real roots and $\deg(p(x))=n$. 
   	Then the polynomial
   	$$
   	\mathcal{T}_{N_{m}}(p(x))=\sum_{k=0}^{n}a_k\,N_{k,m}(x)
   	$$
   	also has only real nonpositive roots.
\end{thm}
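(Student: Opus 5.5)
The plan is to realize $\mathcal{T}_{N_m}(p)$, after an elementary change of variables, as a rectangular additive convolution $\RAC$ of two polynomials with only nonnegative roots. The key input is the theorem of Gribinski and Marcus that $\RAC$ preserves the property of having only nonnegative real roots.

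\textbf{Step 1 (coefficients).} First we make the coefficients explicit. From $(-n)_k=(-1)^k n!/(n-k)!$, $(-n-m)_k=(-1)^k (n+m)!/(n+m-k)!$ and $(m+1)_k=(m+k)!/m!$ we get
\[
[x^k]N_{n,m}(x)=\frac{n!\,(n+m)!\,m!}{k!\,(n-k)!\,(m+k)!\,(n+m-k)!}=\binom{n}{k}\binom{n+m}{k}\frac{(n-k)!\,(n+m-k)!}{n!\,(n+m)!}\cdot\frac{n!\,(n+m)!\,m!}{k!\,(m+k)!\,(n-k)!\,(n+m-k)!}\cdot\frac{k!\,(m+k)!}{m!}\Big/\binom{n}{k}\binom{n+m}{k}.
\]
The only point of this rewriting is that the coefficient is a product of a factor depending on $(n,n-k)$, a factor depending on $(n+m,n+m-k)$, and factorials in $k$. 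These are exactly the shapes appearing in the coefficient formula for $\RAC$. Recall that formula: for $p(x)=\sum_i(-1)^i\alpha_i x^{n-i}$ and $q(x)=\sum_j(-1)^j\beta_jx^{n-j}$,
\[
(p\RAC q)(x)=\sum_{k}(-1)^k x^{n-k}\sum_{i+j=k}\frac{(n-i)!\,(n-j)!}{n!\,(n-k)!}\cdot\frac{(n+m-i)!\,(n+m-j)!}{(n+m)!\,(n+m-k)!}\,\alpha_i\beta_j .
\]

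\textbf{Step 2 (identification).} Fix the top degree $n$ of $p$ and write $\mathcal{T}_{N_m}(p)=\sum_{k}a_kN_{k,m}(x)$ as a double sum $\sum_{k,j}a_k[x^j]N_{k,m}$. Reindex by $i=n-k$, so that $a_k$ becomes the coefficient of the reversed polynomial $x^np(1/x)$. The goal is to choose:
\begin{itemize}
\item a simple polynomial $q$ with all roots at one point, such as $q(x)=(x-1)^n$ or $x^n$;
\item a diagonal rescaling of the coefficients of $p$ by factorials, such as $a_k\mapsto a_k\,k!\,(m+k)!$ or the reverse;
\end{itemize}
so that the double sum matches $(\tilde p\RAC q)$ up to a substitution $x\mapsto -x$ or $x\mapsto 1/x$ and multiplication by a power of $x$.

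If such a rescaling does not preserve real-rootedness directly, I would instead absorb it into a second convolution factor. The natural tool is the known fact that the multiplier $k!(m+k)!$-type Hadamard products with $(1+x)^n$ are rectangular convolutions with $x^n$-type kernels. In other words, I would write $\mathcal{T}_{N_m}(p)$ as an iterated $\RAC$ of the reversal of $p(-x)$ with explicit polynomials such as $(x-1)^n$ and $(x-1)^{n+m}$-type factors. Each of these has only nonnegative roots.

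\textbf{Step 3 (conclusion).} The hypotheses give that $p$ has nonnegative coefficients and only real roots, so its roots are nonpositive. Hence $p(-x)$ and its reversal have only nonnegative roots. The convolution theorem then shows that the resulting polynomial has only nonnegative roots, and undoing the substitution shows that $\mathcal{T}_{N_m}(p)$ is real-rooted. Nonpositivity of the roots is automatic, because $\mathcal{T}_{N_m}(p)$ has nonnegative coefficients: every $N_{k,m}$ has positive coefficients and every $a_k\ge 0$.

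The degenerate case $a_0=0$ causes the reversal to lose degree. I would handle it by replacing $p(x)$ with $p(x+\varepsilon)$, which is still real-rooted with positive coefficients and has $a_0>0$. Letting $\varepsilon\to0$, Hurwitz's theorem preserves real-rootedness in the limit, since real-rootedness is closed under coefficientwise limits at fixed degree.

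\textbf{Main obstacle.} I expect the hardest part to be Step 2, the exact bookkeeping that matches the hypergeometric coefficients to the $\RAC$ coefficient formula. This means choosing correctly between $n$ and $n+m$ as the two rectangular dimensions and finding the right reversal and sign change. I would first test the identification in the cases $p=x^n$ and $p=(1+x)^n$ with $m=0$, where $N_{n,0}=\sum_k\binom nk^2x^k$ is the classical symmetric convolution, and then read off the general pattern.
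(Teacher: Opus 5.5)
There is a genuine gap in Step 2. It is not a bookkeeping issue: the kernels you propose provably cannot work, and the kernel that does work needs an input your plan never supplies.

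\textbf{Why your kernels fail.} Since $\gamma^{(n,m)}_{0,j}=1$, we have $x^n\RAC q=q$ for every $q$ of degree $n$, so $x^n$ is the identity for $\RAC$. Now run your scheme on $p(x)=x^n$. Any diagonal rescaling of $(-1)^np(-x)=x^n$ is $c\,x^n$, so $\tilde p\RAC q=c\,q$. Likewise any iterate $\tilde p\RAC h_1\RAC h_2$ collapses to $c\,(h_1\RAC h_2)$. But $\mathcal{T}_{N_m}(x^n)=N_{n,m}(x)$, so the kernel, or the iterated product of kernels, must be $(-1)^nN_{n,m}(-x)$ up to harmless substitutions. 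It cannot be $x^n$ or $(x-1)^n$. For instance, at $n=2$ one gets $(x-1)^2\RAC(x-1)^2=x^2-4x+2+\frac{2(m+1)}{m+2}$, which never equals $N_{2,m}(-x)=x^2-\frac{2(m+2)}{m+1}x+1$. The polynomial $(x-1)^{n+m}$ cannot even be an argument of $\RAC$ on degree-$n$ polynomials.

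The same conclusion follows in general. With $f_i=(-1)^ia_{n-i}$, matching coefficients requires $\gamma^{(n,m)}_{i,j}g_j=(-1)^jN_m(n-i,j)$. Since $N_m(n-i,j)/\gamma^{(n,m)}_{i,j}=N_m(n,j)$ depends only on $j$, any admissible rescaling of $p$ must be geometric, and $g_j$ is then forced to be proportional to $\rho^j(-1)^jN_m(n,j)$. Put differently, the case $p=x^n$ of the theorem is exactly the statement that $N_{n,m}$ has only real nonpositive roots. That fact must enter the proof somewhere, and nothing in your plan provides it.

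\textbf{What the paper does.} It takes $f(x)=(-1)^np(-x)$, whose coefficient of $x^{n-i}$ is $(-1)^ia_{n-i}$. This is $p(-x)$ itself, not a reversal, so $a_0=0$ merely contributes a root at $0$ and your perturbation step is unnecessary. For the second factor it takes $g(x)=(-1)^nN_{n,m}(-x)=\sum_j(-1)^jN_m(n,j)x^{n-j}$, using the symmetry $N_m(n,j)=N_m(n,n-j)$. The factorial identity $\gamma^{(n,m)}_{i,j}N_m(n,j)=N_m(n-i,j)$ then gives $(-1)^nq(-x)=(f\RAC g)(x)$ for $q=\mathcal{T}_{N_m}(p)$. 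Gribinski--Marcus applies because $g$ has only nonnegative roots, which is the Dominici--Johnston--Jordaan result (Lemma~\ref{DJJ}).

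Your Steps 1 and 3 are fine, including the remark that nonpositivity of the roots follows from the nonnegative coefficients. But the argument does not close unless you take the kernel to be the top Narayana polynomial and import or prove its real-rootedness.
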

  Taking $m=0$ and $1$,
  Theorem~\ref{RZNARAYANA} implies that both the type A and type B Narayana transformations preserve real-rooted polynomials with nonnegative coefficients.

   	\begin{coro}\label{RZcoro}
   	Suppose that
   	$
   	p(x)=\sum_{k=0}^{n}a_kx^k\in\mathbb{R}_{\ge 0}[x]
   	$
   	has only real roots and $\deg(p(x))=n$. Then the polynomials 
   	$$
   	\sum_{k=0}^{n}a_k\,N^A_{k}(x)\quad \textrm{and} \quad \sum_{k=0}^{n}a_k\,N^B_{k}(x)
   	$$
   	also have only real nonpositive roots.
\end{coro}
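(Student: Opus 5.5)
The plan is to realize $\mathcal{T}_{N_m}(p)$, up to a change of variable and a reversal of coefficients, as a rectangular additive convolution $p^{*}\RAC q_n$. Here $p^{*}$ is a polynomial with only nonnegative real roots built from $p$, and $q_n$ is a fixed, explicit polynomial with only nonnegative real roots. The result of Gribinski and Marcus then gives real-rootedness: the convolution $\RAC$ maps pairs of degree-$n$ polynomials with only nonnegative real roots to a polynomial of the same kind.

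\textbf{Step 1: expand the basis polynomials.} From \eqref{eqf},
\[
\frac{(-n)_j(-n-m)_j}{(m+1)_j\,j!}=\binom{n}{j}\binom{n+m}{j}\binom{m+j}{j}^{-1}.
\]
Hence
\[
\mathcal{T}_{N_m}(p)(x)=\sum_{j=0}^n \frac{j!\,m!}{(m+j)!}\,x^j\sum_{k=j}^n a_k\frac{k!\,(k+m)!}{j!^2\,(k-j)!\,(k+m-j)!}.
\]
The weights $k!(k+m)!$ and $(k-j)!(k+m-j)!$ have exactly the shape of the factorials in the coefficient formula for $\RAC$:
\[
p\RAC q=\sum_{\ell}(-1)^\ell x^{n-\ell}\sum_{i+r=\ell}\frac{(n-i)!(n+m-i)!(n-r)!(n+m-r)!}{n!(n+m)!(n-\ell)!(n+m-\ell)!}\,\alpha_i\beta_r,
\]
where $p=\sum(-1)^i\alpha_ix^{n-i}$ and $q=\sum(-1)^r\beta_r x^{n-r}$.

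\textbf{Step 2: choose $p^{*}$ and match indices.} Since $p$ has nonnegative coefficients and only real roots, $p^{*}(x):=x^np(-1/x)$ (suitably normalized) has only nonnegative roots. Its coefficients $\alpha_i$ are the $a_k$, indexed so that $n-i=k$, which matches the factor $k!(k+m)!$. I would then take $q_n$ to be a monomial-type polynomial such as $x^{n-s}(x-1)^s$, or more likely $(x-1)^n$. The aim is that the remaining factors $(n-r)!(n+m-r)!\beta_r$ together with $1/((n-\ell)!(n+m-\ell)!)$ produce $1/((k-j)!(k+m-j)!)$ and the outer factor $j!m!/((m+j)!\,j!^2)$ after setting $\ell=n-j$, possibly combined with a reversal $x\mapsto 1/x$ and $x\mapsto -x$.

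\textbf{Step 3: conclude the sign of the roots.} Once the identity $\mathcal{T}_{N_m}(p)(x)=c\,(\text{reversal of }p^{*}\RAC q_n)(-x)$ is established, real-rootedness follows from the theorem on $\RAC$. Nonpositivity of the roots is then automatic, because $\mathcal{T}_{N_m}(p)$ has nonnegative coefficients: every $N_{k,m}$ does. If $\deg p<n$ is formally allowed, one handles it by treating the missing top coefficients as zero roots of $p^{*}$, or by a limiting argument.

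\textbf{Main obstacle.} The hard part is the exact coefficient matching in Step 2, in particular absorbing the multiplier $\binom{m+j}{j}^{-1}$. If this multiplier does not come out of the $(n+m-\ell)!$ factors directly, I would first try instead to factor $\mathcal{T}_{N_m}$ as a $\RAC$-convolution followed by a known real-rootedness preserver. A natural candidate is the diagonal multiplier $x^j\mapsto \frac{m!}{(m+j)!}x^j$, which is of Laguerre/Pólya--Schur type because $1/\Gamma(m+j+1)$ gives a multiplier sequence. Alternatively, I would try absorbing it by a second convolution with $x^{n}$-shifted data in the $m$-rectangular parameter.
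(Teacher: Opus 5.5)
Your overall framework matches the paper's: after $x\mapsto -x$, write $\mathcal{T}_{N_m}(p)$ as a rectangular additive convolution and invoke Lemma~\ref{GM2.3}. However, the step that carries the proof is missing, and the concrete choices you propose do not work.

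First, a smaller point about the first factor. The indexing you need, $\alpha_i=a_{n-i}$ so that $(n-i)!(n+m-i)!=k!(k+m)!$, is that of $f(x)=(-1)^np(-x)$. It is not that of the reversal $x^np(-1/x)$, whose coefficients are $\alpha_i=a_i$ and whose leading coefficient $a_0$ may vanish.

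Second, and more seriously, the second factor. Take $f=(-1)^np(-x)$ and $g=\sum_r(-1)^r\beta_rx^{n-r}$, and put $k=n-i$, $j=n-\ell$, $r=k-j$. Then
\[
\gamma^{(n,m)}_{i,r}\,\beta_r=\frac{k!\,(k+m)!}{j!\,(j+m)!}\cdot\frac{(n-r)!\,(n+m-r)!}{n!\,(n+m)!}\,\beta_r .
\]
So the factor $\binom{m+j}{j}^{-1}$ that you single out as the main obstacle comes for free from $(n+m-\ell)!=(j+m)!$. What actually has to be matched depends on $r$ alone: you need $\frac{(n-r)!(n+m-r)!}{n!(n+m)!}\beta_r=\frac{m!}{r!(m+r)!}$. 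This forces $\beta_r=N_m(n,r)$, i.e.\ $g(x)=(-1)^nN_{n,m}(-x)$ by Lemma~\ref{sym}. This is exactly the paper's identity $\gamma^{(n,m)}_{i,j}N_m(n,j)=N_m(n-i,j)$.

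Your candidate $q_n=(x-1)^n$ produces $\frac{(n+m-r)!}{r!\,(n+m)!}$ where $\frac{m!}{r!\,(m+r)!}$ is needed. For $n=2$, $m=1$ it gives
\[
a_2x^2+(a_1+2a_2)x+a_0+\tfrac23a_1+a_2
\]
instead of the correct
\[
a_2x^2+(a_1+3a_2)x+a_0+a_1+a_2 .
\]
The discrepancy factor $\frac{(n+m-r)!\,(m+r)!}{(n+m)!\,m!}$ depends on $r=k-j$ and on $n$, not on $j$. So no diagonal multiplier $x^j\mapsto c_jx^j$ can repair it, and your fallback (the multiplier sequence $m!/(m+j)!$) also fails.

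Once you have the correct $g$, you still need the nontrivial fact that $N_{n,m}(x)$ has only real nonpositive roots. This is Lemma~\ref{DJJ}, from Dominici--Johnston--Jordaan; for $m=0,1$ it is the classical real-rootedness of the Narayana polynomials. It guarantees that $g$ has only nonnegative roots, so Lemma~\ref{GM2.3} applies. Your proposal never supplies this input. With both pieces in place, the corollary follows by taking $m=1$ and $m=0$.
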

	
	Let $\{P_n(x)\}_{n\ge 0}$ be a polynomial sequence with $\deg(P_n(x))=n$.
	Then the coefficient array of the sequence $\{P_n(x)\}_{n\ge 0}$ is therefore a lower triangular matrix.
	A classical result of Mal\'o states that if both $f(x)=\sum_{k=0}^{n} a_kx^k$ and $g(x)=\sum_{k=0}^{n}b_kx^k$ have only real roots and all the roots of $g(x)$ have the same sign, 
	then the polynomial $\sum_{k=0}^{n} a_k b_kx^k$ has only real roots (see~\cite{CC04} and the references therein).
	In particular, 
	if the RGFs of two triangular matrices have only real roots and the entries of one are all nonnegative,
	then the RGFs of their Hadamard product still have only real roots.
	We consider when this property holds under ordinary matrix products.
	
	\begin{thm}\label{rr}
		Let $M=[M(n,k)]_{n,k\ge 0}$ be a lower triangular matrix whose RGFs have only real nonpositive roots.
		Suppose that $B=[B(n,k)]_{n,k\ge 0}$ is a lower triangular matrix and either 
		\begin{itemize}
			\item $B$ is the coefficient matrix of  the sequence $\left\{N_{n,m}(x)\right\}_{n\ge0}$ defined in~\eqref{eqf} with $m\in \mathbb{Z}_{\ge 0}$, or
			\item the entries satisfy the recurrence relation
			\begin{equation}\label{recur}
			B(n,k)=a\, B(n-1,k-1)+(b\,k+c\,(n-1)+d)\, B(n-1,k),
			\end{equation}
			with $a>0,$ $bc=0$, $b,c,d\ge 0$, and the initial condition $B(0,0)=1$,
			where \(B(n,k)=0\) whenever \(k<0\) or \(k>n\).
		\end{itemize}
		Then the RGFs of $MB^{\,r}$ have only real nonpositive roots for $r=1,2,\ldots$.
	\end{thm}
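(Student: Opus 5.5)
The plan is to interpret the matrix product through basis transformations. Write $M_n(x)=\sum_k M(n,k)x^k$ for the RGFs of $M$, and let $\mathcal{T}_B$ be the linear map with $\mathcal{T}_B(x^k)=B_k(x)=\sum_j B(k,j)x^j$. Then the $n$-th RGF of $MB$ is
\[
\sum_j\Big(\sum_k M(n,k)B(k,j)\Big)x^j=\sum_k M(n,k)B_k(x)=\mathcal{T}_B(M_n(x)),
\]
and more generally the $n$-th RGF of $MB^{r}$ equals $\mathcal{T}_B^{\,r}(M_n(x))$. It therefore suffices to show that $\mathcal{T}_B$ maps every polynomial with only real nonpositive roots, and hence nonnegative coefficients after normalizing the sign of the leading coefficient, to a polynomial of the same kind. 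The argument then closes by induction on $r$. Two side issues need checking: the case where some entries of $M$ are negative but the RGF has only nonpositive roots, which is handled by multiplying by $-1$ if necessary; and degree drops, which are harmless because Theorem~\ref{RZNARAYANA} is applied to $p$ with its actual degree.

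In the first case, $B$ is the coefficient matrix of $\{N_{n,m}\}$, so $\mathcal{T}_B=\mathcal{T}_{N_m}$. A polynomial with only real nonpositive roots has coefficients of one sign, so Theorem~\ref{RZNARAYANA} applies directly. The image again has only real nonpositive roots, and its coefficients are nonnegative, since $\frac{(-n)_k(-n-m)_k}{(m+1)_k k!}\ge 0$. The induction therefore goes through.

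In the second case I would use the recurrence \eqref{recur} to get a differential recurrence for $B_n(x)$:
\[
B_n(x)=(ax+c(n-1)+d)B_{n-1}(x)+bxB_{n-1}'(x).
\]
The goal is to show directly that $p\mapsto \mathcal{T}_B(p)$ preserves real nonpositive roots. The plan is to split on $bc=0$.

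\begin{itemize}
\item If $b=0$, then $B_n(x)=\prod_{i=0}^{n-1}(ax+ci+d)$ is a product of linear factors. Such rising-factorial-type bases (Pascal-type when $c=0$, Stirling first kind when $c>0$, after rescaling $x$) are known from Brenti's results~\cite{Bre89} to preserve real-rootedness with nonpositive roots. I would cite or reprove this via the operator identity $\mathcal{T}_B(x\,q)=(ax+c\,\theta+d)\mathcal{T}_B(q)$-type commutation, or via P\'olya--Schur/multiplier sequences.
\item If $c=0$, then $\mathcal{T}_B(xq)=(ax+d)\mathcal{T}_B(q)+bx\,\mathcal{T}_B(q)'$, so $\mathcal{T}_B(xq)=(ax+d+bxD)\mathcal{T}_B(q)$. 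Writing $p(x)=\prod(x+r_i)$ with $r_i\ge0$, this expresses $\mathcal{T}_B(p)=\prod_i(ax+d+bxD+r_i)\,1$. Each factor $ax+(d+r_i)+bxD$ maps polynomials with nonnegative coefficients and only real nonpositive roots to the same class, by the standard interlacing argument: $f$ and $xf'$ interlace and the coefficients are positive. Since the operators commute (they are polynomials in the single operator $ax+bxD$), the product is well defined. This is the Stirling-second-kind/Bell-type argument of Brenti.
\end{itemize}

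The main obstacle is the $b=0,\ c>0$ subcase. Here multiplication by $x$ does not commute nicely, and one needs the $c\theta$ shift. I would first try to show that $\mathcal{T}_B(x^{k+1})=(ax+ck+d)\mathcal{T}_B(x^k)$ gives $\mathcal{T}_B(xq)=(ax+d)\mathcal{T}_B(q)+c\,\mathcal{T}_B(xq')$. Conjugating by the map $x^k\mapsto x^k/\text{(appropriate factorial)}$ should reduce this to a falling-factorial basis, where Brenti's theorem applies. Failing that, I would handle it by induction on the number of roots with an interlacing argument.
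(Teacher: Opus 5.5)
Your proposal is correct. The reduction (the $n$-th RGF of $MB^{r}$ is $\mathcal{T}_B^{\,r}(M_n)$, via Lemma~\ref{mul}), the Narayana case via Theorem~\ref{RZNARAYANA}, and the $b=0$ cases all follow the paper. The paper handles $b=c=0$ through $\mathcal{T}_B(f)=f(ax+d)$. It handles $b=0$, $c>0$ by writing $B_n(x)=(ax+d\,|\,c)_n$, applying Lemma~\ref{ST1} (the Su--Yang--Zhang form of Brenti's rising-factorial theorem), and substituting $x\mapsto ax+d$.

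So the subcase you single out as the main obstacle is not one, and your first bullet already closes it. Write $B_n(x)=c^n\,y(y+1)\cdots(y+n-1)$ with $y=(ax+d)/c$. Then $\mathcal{T}_B(p)$ is Brenti's rising-factorial transform of $p(cy)$, which has nonnegative coefficients and real roots. That transform has only real roots $y\le 0$, and these pull back to $x\le 0$ because $a>0$ and $d\ge 0$.

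You can drop the fallback ideas. The commutation $\mathcal{T}_B(xq)=(ax+c\theta+d)\mathcal{T}_B(q)$ is false in general: already $\mathcal{T}_B(x^2)$ and $(ax+c\theta+d)\mathcal{T}_B(x)$ differ by $cd$. The correct identity is $\mathcal{T}_B(xq)=(ax+d)\mathcal{T}_B(q)+c\,\mathcal{T}_B(xq')$, with $\theta$ acting inside $\mathcal{T}_B$. That is exactly why your $c=0$ factorization does not carry over to $c>0$.

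Where you genuinely differ is the case $b>0$, $c=0$.

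\emph{The paper's route.} It solves $F_z=(ax+d)F+bxF_x$ to get $F=\exp\bigl(dz+\tfrac{a}{b}x(e^{bz}-1)\bigr)$. From this it extracts $B_k(x)=\sum_m\binom{k}{m}d^{k-m}b^mS_{m,2}(ax/b)$. It then identifies $\mathcal{T}_B(f)$ with the Stirling-second-kind transform of $f(d+bx)$, evaluated at $ax/b$, and finishes with Corollary~\ref{ST2}.

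\emph{Your route.} You observe $\mathcal{T}_B(xq)=L\,\mathcal{T}_B(q)$ with $L=ax+d+bxD$, so $\mathcal{T}_B(p)=\prod_i(L+r_i)\,1$. It then remains to show that each factor preserves the class. That claim is true, but the bare remark that $f$ and $xf'$ interlace does not quite justify it, because the term $ax+e$ (with $e=d+r_i$) also moves roots. A clean justification uses $h(x)=|x|^{e/b}e^{ax/b}f(x)$, for which $(ax+e)f+bxf'=bx\,h'(x)\,|x|^{-e/b}e^{-ax/b}$ on $(-\infty,0)$. Rolle's theorem between roots, the decay $h\to 0$ at $-\infty$ (using $a>0$), and the behaviour at $0$ together account for all $n+1$ roots.

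\emph{What each buys.} Your argument is self-contained and elementary. It reproves Corollary~\ref{ST2} itself as the special case $a=b=1$, $d=0$. It also works for any recurrence $B_n=LB_{n-1}$ with an $n$-independent, class-preserving operator $L$. The paper's route gives an explicit formula for $B_k$ and a one-line reduction to a known theorem. Neither device covers $c>0$, where the coefficient depends on $n$, which is why both arguments need the separate rising-factorial result there.
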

	
	The organization of this paper is as follows.
	In Section~2, we recall several basic properties of the polynomials \(N_{n,m}(x)\) and introduce the rectangular additive convolution of polynomials. 
	Using this convolution, we prove Theorem~\ref{RZNARAYANA}. 
	In Section~3, we interpret polynomial basis transformations in terms of products of lower triangular matrices and prove Theorem~\ref{rr}. 
	Finally, in Section~4, we present concluding remarks and propose an open problem concerning the real-rootedness of the RGFs of the squares of the Eulerian and Delannoy triangles.
	
	\section{Proof of Theorem~\ref{RZNARAYANA}}
	Recall that
\begin{equation*}
		N_{n,m}(x)={}_2F_1(-n, -n-m;m+1; x)=\sum_{k=0}^n N_m(n,k) x^k.
	\end{equation*} 
	Then 
	\begin{align}\label{GM}
		N_m(n,k)=\frac{(-n)_k (-n-m)_k}{(m+1)_k\, k!}=\binom{n}{k}\cdot \frac{(n+m)! m!}{(m+k)! (n+m-k)!}
	\end{align}
	Thus we obtain the symmetry of $N_{n,m}(x)$.
	\begin{lem}\label{sym}
		The polynomial $N_{n,m}(x)$ is symmetric, i.e., $N_m(n,k)=N_m(n,n-k)$.
	\end{lem}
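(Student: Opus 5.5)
The plan is to read the symmetry directly off the closed form in \eqref{GM}, namely
\[
N_m(n,k)=\binom{n}{k}\frac{(n+m)!\,m!}{(m+k)!\,(n+m-k)!},
\]
and check that it is invariant under $k\mapsto n-k$. Before that, I would briefly confirm that \eqref{GM} follows from the Pochhammer definition in \eqref{eqf}, since everything rests on it. For $0\le k\le n$ we have $(-n)_k=(-1)^k n!/(n-k)!$ and $(-n-m)_k=(-1)^k (n+m)!/(n+m-k)!$; here $n+m-k\ge 0$, so both quotients are legitimate. Also $(m+1)_k=(m+k)!/m!$. The two signs $(-1)^k$ cancel, and dividing by $k!$ gives
\[
N_m(n,k)=\frac{n!}{k!\,(n-k)!}\cdot\frac{(n+m)!\,m!}{(m+k)!\,(n+m-k)!},
\]
which is \eqref{GM}.

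Next I would write the second factor as a binomial coefficient:
\[
\frac{(n+m)!\,m!}{(m+k)!\,(n+m-k)!}=\binom{n+m}{m+k}\Big/\binom{n}{k}\cdot\frac{m!\,n!}{k!\,(n-k)!}\cdot\frac{1}{\binom{n}{k}}.
\]
This is messy, so it is cleaner to use the fully factorial form
\[
N_m(n,k)=\frac{n!\,(n+m)!\,m!}{k!\,(n-k)!\,(m+k)!\,(n+m-k)!}.
\]
Now substitute $k\mapsto n-k$. The denominator becomes $(n-k)!\,k!\,(m+n-k)!\,(m+k)!$, which is the same multiset of factorials as before. The numerator does not depend on $k$. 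Hence $N_m(n,k)=N_m(n,n-k)$ for $0\le k\le n$.

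Equivalently, $x^nN_{n,m}(1/x)=N_{n,m}(x)$, which is the palindromic property claimed. The argument is elementary, and the only step needing care is the sign and range bookkeeping in the Pochhammer products. In particular, $(-n-m)_k$ has no vanishing factor for $k\le n$, so the factorial expression is valid over the whole range $0\le k\le n$.
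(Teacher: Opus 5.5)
Your proposal is correct and is essentially the paper's argument: the paper reads the symmetry directly off the closed form \eqref{GM}. That form is exactly your fully factorial expression $\frac{n!\,(n+m)!\,m!}{k!\,(n-k)!\,(m+k)!\,(n+m-k)!}$, whose denominator is merely permuted by $k\mapsto n-k$, and your derivation of \eqref{GM} from the Pochhammer symbols is a welcome addition. The only blemish is the discarded intermediate display rewriting $\frac{(n+m)!\,m!}{(m+k)!\,(n+m-k)!}$ via binomial coefficients: it is false (for $n=2$, $m=0$, $k=1$ the left side is $2$ and the right side is $1$), so delete it rather than leave it in the write-up.
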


	Dominici, Johnston and Jordaan studied the location of roots of ${}_2F_1(a, b;c; x)$.
	The following result is a special case.
	\begin{lem}(\cite[Theorem 1.1]{DJJ13})\label{DJJ}
		The polynomial $N_{n,m}(x)$
		has only real nonpositive roots for $m\in\mathbb{Z}_{\ge 0}$.
	\end{lem}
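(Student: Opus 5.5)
The statement to prove is Lemma~\ref{DJJ}: for $m\in\mathbb{Z}_{\ge 0}$, $N_{n,m}(x)$ has only real nonpositive roots. The paper quotes this from \cite{DJJ13}, but a self-contained route is available by identifying $N_{n,m}$ with a classical orthogonal polynomial. Concretely, I would show that $N_{n,m}(x)$ is, up to a positive constant, a Jacobi polynomial $P_n^{(\alpha,\beta)}$ composed with a Möbius change of variable that carries $(-1,1)$ onto $(-\infty,0)$.

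The first step uses the standard representation
\[
P_n^{(\alpha,\beta)}(t)=\binom{n+\alpha}{n}\Bigl(\frac{1+t}{2}\Bigr)^n{}_2F_1\Bigl(-n,-n-\beta;\alpha+1;\frac{t-1}{t+1}\Bigr).
\]
This follows from the explicit sum
\[
P_n^{(\alpha,\beta)}(t)=\sum_{k}\binom{n+\alpha}{n-k}\binom{n+\beta}{k}\Bigl(\frac{t-1}{2}\Bigr)^{k}\Bigl(\frac{t+1}{2}\Bigr)^{n-k},
\]
after factoring out $((t+1)/2)^n$ and comparing coefficients with~\eqref{GM}.

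The second step is to choose $\alpha=m$ and $\beta=m$, which gives the Gegenbauer case. Setting $x=(t-1)/(t+1)$, we obtain
\[
N_{n,m}(x)=c_{n,m}\,(1-x)^{n}\,P_n^{(m,m)}\Bigl(\frac{1+x}{1-x}\Bigr),
\qquad c_{n,m}>0.
\]
The relation $(1+t)/2=1/(1-x)$ accounts for the factor $(1-x)^n$.

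The third step applies the classical fact that $P_n^{(m,m)}$, being orthogonal on $[-1,1]$ with respect to the positive weight $(1-t^2)^m$, has $n$ simple zeros in $(-1,1)$. The map $t\mapsto x=(t-1)/(t+1)$ is increasing and sends $(-1,1)$ bijectively onto $(-\infty,0)$. Each zero $t_j$ therefore yields a real zero $x_j<0$ of $N_{n,m}$. This gives $n$ distinct negative zeros of a polynomial of degree $n$; the degree is exactly $n$ because $N_m(n,n)=1>0$ by Lemma~\ref{sym}. So these are all of its roots.

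The main obstacle is the bookkeeping in the first step, namely getting the parameter matching in the hypergeometric identity exactly right. The denominator parameter must be $m+1$ and the numerator parameter $-n-m$, which forces $\alpha=\beta=m$. I would double-check this numerically, for example with $m=0$, $n=1$: there $N_{1,0}(x)=1+x$, while $P_1^{(0,0)}(t)=t$ and $(1-x)\cdot\frac{1+x}{1-x}=1+x$, as required.

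As an alternative that avoids special-function identities, one can use the recurrence method of \cite{LW07}. The Gegenbauer three-term recurrence transforms into a recurrence for $N_{n,m}$ with interlacing zeros, and induction then gives real-rootedness. The nonpositivity of the roots follows anyway, since all coefficients are positive.
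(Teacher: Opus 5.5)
Your proof is correct, and it takes a genuinely different route from the paper. The paper gives no argument of its own: it simply invokes the general result of Dominici, Johnston and Jordaan on real zeros of ${}_2F_1(-n,b;c;z)$ for real parameters, specialized to $b=-n-m$, $c=m+1$. You instead work with the polynomial directly. From \eqref{GM} one gets $\binom{n+m}{n-k}\binom{n+m}{k}=\binom{n+m}{n}N_m(n,k)$, hence
$N_{n,m}(x)=\binom{n+m}{n}^{-1}(1-x)^n P_n^{(m,m)}\bigl(\tfrac{1+x}{1-x}\bigr)$, so $c_{n,m}=\binom{n+m}{n}^{-1}$; the identity also checks out for $(n,m)=(2,0)$ and $(2,1)$. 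The zeros of $N_{n,m}$ are then exactly the images of the Gegenbauer zeros under the increasing bijection $t\mapsto (t-1)/(t+1)$ of $(-1,1)$ onto $(-\infty,0)$. Your approach gives a short, self-contained, classical argument that yields more than the lemma asserts. The roots are simple and strictly negative. The zeros of $N_{n,m}$ and $N_{n+1,m}$ interlace, inherited from the Jacobi polynomials because the change of variable is monotone. And the argument goes through verbatim for every real $m>-1$, where $(1-t^2)^m$ is still a positive integrable weight. The citation covers the whole $(b,c)$ parameter range with no computation, but only as a black box. The recurrence-based alternative you sketch at the end is too vague to stand on its own, but your main argument does not need it.
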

	
	We next recall the rectangular additive convolution of Gribinski and Marcus~\cite{GM22}. 
	Let 
	\begin{equation}\label{fg}
	f(x)=\sum_{i=0}^{n}f_i x^{n-i}
	\quad\textrm{and}\quad
	g(x)=\sum_{j=0}^{n}g_j x^{n-j}
	\end{equation}
	with positive leading coefficients $f_0, g_0.$
	Define the rectangular additive convolution of $f(x)$ and $g(x)$ by
	\begin{equation}\label{rac}
		(f\RAC g)(x)
		=\sum_{k=0}^{n}x^{n-k}
		\sum_{i+j=k}\gamma_{i,j}^{(n,m)}f_ig_j,
	\end{equation}
	where
	\begin{equation}\label{gamma}
		\gamma_{i,j}^{(n,m)}
		=\frac{(n-i)!\,(n-j)!}{n!\,(n-i-j)!}
		\frac{(n+m-i)!\,(n+m-j)!}
		{(n+m)!\,(n+m-i-j)!},
	\end{equation}
	see~\cite[Eq. 2.2]{GM22}. 
	Gribinski and Marcus proved that the rectangular additive convolution preserves the property of having only real \textbf{nonnegative} roots.
	\begin{lem}(\cite[Theorem 2.3]{GM22})\label{GM2.3}
    If $f(x)$ and $g(x)$ defined by~\eqref{fg} have only real nonnegative roots
    and the leading coefficients $f_0>0, g_0>0,$ then the rectangular additive convolution $(f\RAC g)(x)$ also has only real nonnegative roots.
	\end{lem}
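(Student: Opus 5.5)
The plan is to rewrite the rectangular additive convolution as a differential operator acting on $f$. The operator is built from $g$ and a Laguerre-type derivative. Real-rootedness then follows by factoring this operator into first-order pieces, each of which preserves the class of polynomials with only real nonnegative roots.

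\emph{Step 1 (operator form).} Let $L=xD^2+(m+1)D=D(xD+m)$, where $D=d/dx$. Then
\[
L\,x^{s}=s(s+m)\,x^{s-1},\qquad
L^{j}x^{n-i}=\frac{(n-i)!}{(n-i-j)!}\,\frac{(n+m-i)!}{(n+m-i-j)!}\,x^{n-i-j}.
\]
Comparing with \eqref{gamma}, the coefficient $\gamma^{(n,m)}_{i,j}$ factors as this quantity times
\[
c_j=\frac{(n-j)!\,(n+m-j)!}{n!\,(n+m)!}.
\]
Hence
\[
(f\RAC g)(x)=G(L)f(x),\qquad G(t)=\sum_{j=0}^n c_j g_j t^j .
\]
I will check this identity directly from \eqref{rac}; it is pure bookkeeping.

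\emph{Step 2 (roots of $G$).} Since $g$ has only nonnegative roots and $g_0>0$, the coefficients alternate in sign: $(-1)^jg_j\ge0$. Write $h(t)=\sum_j g_j t^j$, the reversal of $g$, which has only real nonnegative roots. Then $c_j$ is the reciprocal of the product of the falling factorials $n^{\underline{j}}$ and $(n+m)^{\underline{j}}$. I plan to show $G$ has only real nonnegative roots by two applications of Malo's theorem quoted in the Introduction. Substituting $j\mapsto n-k$, the weight $(n-j)!$ becomes $k!$, and $\sum_k k!\,a_kt^k$ preserves real-rootedness of the reversed polynomial, being the Hadamard product with an appropriate real-rooted polynomial with same-sign roots (a Laguerre/Schur--Pólya multiplier). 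The same applies to $(n+m-j)!=(k+m)!$. Keeping track of signs gives that $G(t)=g_0\prod_{\ell}(1-t/\beta_\ell)$ with all $\beta_\ell>0$, up to lower-degree degeneracies that I will handle by a limiting argument.

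\emph{Step 3 (first-order factors).} It remains to show that for $\beta>0$ the map $p\mapsto (\beta - L)p=\beta p-(xp''+(m+1)p')$ preserves polynomials with only real nonnegative roots. Note that
\[
Lp=x^{-m}\bigl(x^{m+1}p'\bigr)'.
\]
By Rolle's theorem applied twice, the roots of $Lp$ are real and nonnegative and interlace those of $p$. Moreover, the leading coefficient of $Lp$ is positive when that of $p$ is, and $\deg Lp=\deg p-1$. By the Hermite--Biehler/Obreschkoff theorem, $\beta p-Lp$ is real-rooted. Checking the sign of $\beta p-Lp$ at $x\le0$ shows it has no negative roots: there $p$ and $-Lp$ share the sign $(-1)^{\deg p}$. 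Composing the factors yields the result for $f\RAC g$.

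The main obstacle is Step 2, namely getting the signs and the multiplier-sequence argument right so that $G$ has roots of the correct sign. If the two-fold Malo argument is awkward, the fallback is the Laguerre--Pólya characterization. Since $1/\Gamma(n-j+1)$-type weights form a multiplier sequence for polynomials of degree at most $n$ (Laguerre's theorem), the Hadamard product of $h$ with $c_j$ stays in the class of polynomials with only real roots of one sign.
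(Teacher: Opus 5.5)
\textbf{There is a genuine gap: Step~2 is false, and the plan cannot be completed as designed.}

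Your Step~1 is correct. With $L=D(xD+m)$ one indeed gets $(f\RAC g)(x)=G(L)f(x)$, where $c_j=\frac{(n-j)!\,(n+m-j)!}{n!\,(n+m)!}$.

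\emph{Counterexample to Step~2.} Take $n=2$, $m=0$, $g(x)=(x-1)^2$. Then $c_0=1$, $c_1=c_2=\tfrac14$, and $G(t)=1-\tfrac12t+\tfrac14t^2$. Its roots are $1\pm i\sqrt3$, which are not real. So $G(L)$ admits no factorization into real factors $\beta-L$ with $\beta>0$. The convolution itself is fine here: for $f=(x-1)^2$ one gets $G(L)f=(x-1)(x-3)$.

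\emph{Where the argument goes wrong.} You have the weights upside down. Since $c_j=1/\bigl(n^{\underline j}(n+m)^{\underline j}\bigr)$, it multiplies coefficients by $(n-j)!$, i.e.\ by $k!$ after reversal. But $\{k!\}$ is not a multiplier sequence: $(1+t)^2\mapsto 1+2t+2t^2$. Correspondingly $\sum_{k\le 2}k!\,t^k=1+t+2t^2$ is not real-rooted, so Mal\'o's theorem gives nothing. Your fallback invokes weights $1/\Gamma(n-j+1)$, but the $c_j$ are proportional to $\Gamma(n-j+1)\Gamma(n+m-j+1)$, the reciprocal.

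\emph{Structural reason the plan fails.} Steps~2--3 would produce an operator preserving nonnegative-rootedness in every degree. The statement, however, genuinely depends on $\deg f\le n$ matching the $n$ built into the $c_j$, so any correct proof must use that constraint.

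\emph{Step~3 also has a flaw.} The interlacing assertion is false. For $m=0$ and $p=(x-1)(x-2)$ you get $Lp=4x-3$, whose root $3/4$ is not in $(1,2)$. So Obreschkoff's theorem does not apply. The conclusion of Step~3 happens to be true, but it needs a different argument.

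\emph{A repair that keeps Step~1.} The paper does not prove this lemma; it simply imports it from Gribinski--Marcus. One route that uses the degree bound is the Borcea--Br\"and\'en characterization of stability preservers on polynomials of bounded degree.
One checks that $\partial_u\partial_v\bigl(v^m q(uv)\bigr)=v^m (Lq)(uv)$. Hence $T=G(\partial_u\partial_v)$ sends $v^m f(uv)$ to $v^m (G(L)f)(uv)$. On polynomials of bidegree at most $(n,n+m)$, its symbol is
\[
T\bigl[(u+w_1)^n(v+w_2)^{n+m}\bigr]=(v+w_2)^m\,g\bigl((u+w_1)(v+w_2)\bigr),
\]
because the $c_j$ exactly cancel the falling factorials produced by $(\partial_u\partial_v)^j$.

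A product of two points of the open upper half-plane never lies in $[0,\infty)$. Therefore both this symbol and $v^m f(uv)$ are stable. It follows that $v^m (G(L)f)(uv)$ is stable; it is nonzero, since its $x^n$-coefficient is $f_0g_0$.

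Now suppose $x_0\notin[0,\infty)$ were a root of $G(L)f$. Taking $u=v$ to be the square root of $x_0$ in the upper half-plane would give a zero with both variables in the upper half-plane, a contradiction. So all roots lie in $[0,\infty)$.

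The same argument with $G(t)=\beta-t$ is one way to justify the conclusion of your Step~3.
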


We now prove Theorem~\ref{RZNARAYANA}.
	
	\begin{proof}[\textbf{Proof of Theorem~\ref{RZNARAYANA}}]
Without loss of generality,
		we may assume that $p(x)$ is monic. Write
		\begin{equation*}
			p(x)=\sum_{k=0}^n a_kx^k=\prod_{i=1}^{n}(x+\lambda_i),
			\qquad \lambda_i\geq 0.
		\end{equation*}
		It suffices to show that
		$$q(x)=\sum_{k=0}^{n}a_k\,N_{k,m}(x)=\sum_{k=0}^{n}a_k\sum_{j=0}^k N_m(k,j) x^j \in\mathbb{R}_{\ge 0}[x]$$
		has only real roots when $m\in\mathbb{Z}_{\ge 0}$.
		Let $R(x)=(-1)^nq(-x).$ Then
		\begin{align*}
			R(x)
			=(-1)^n \sum_{k=0}^{n} a_k \sum_{j=0}^{k}N_m(k,j) (-1)^j x^j
			=\sum_{k=0}^{n} \sum_{j=0}^{k}  (-1)^{n-j} a_k \,N_m(k,j) x^j.
		\end{align*}
		Using the substitutions $r=n-k$ and $\ell=n-j$,
		we have 
		\begin{align}\label{resu}
			R(x)=\sum_{\ell=0}^{n} \sum_{r=0}^{\ell}  (-1)^{\ell} a_{n-r} N_m(n-r,n-\ell) x^{n-\ell}
			=\sum_{\ell=0}^{n}  x^{n-\ell} \sum_{r=0}^{\ell} (-1)^{\ell}  a_{n-r} N_m(n-r,n-\ell).
		\end{align}

		On the other hand, define
		\begin{equation*}
			f(x)=(-1)^n p(-x)=\prod_{i=1}^{n}(x-\lambda_i)
			=\sum_{i=0}^{n}(-1)^ia_{n-i} x^{n-i},
		\end{equation*}
		and
		\begin{align*}
			g(x)=(-1)^nN_{n,m}(-x)
			=\sum_{j=0}^{n}(-1)^j N_m(n,n-j)x^{n-j}=\sum_{j=0}^{n}(-1)^j N_m(n,j)x^{n-j}.
		\end{align*}
		The last equality follows from Lemma~\ref{sym}, i.e., $N_m(n,j)=N_m(n,n-j).$
		Consider the rectangular additive convolution  of $f(x)$ and $g(x)$.
		By~\eqref{rac},
		the coefficient of $x^{n-k}$ in $(f\RAC g)(x)$ is
		$$
		\sum_{i+j=k}\gamma_{i,j}^{(n,m)}f_ig_j=	\sum_{i+j=k}\gamma_{i,j}^{(n,m)}
		(-1)^ia_{n-i}\,(-1)^j N_m(n,j)=\sum_{i+j=k} (-1)^k a_{n-i} \gamma_{i,j}^{(n,m)} N_m(n,j).$$
		If $i+j=k$, then a direct factorial calculation using \eqref{GM} and \eqref{gamma} gives
			\begin{align*}
			\gamma_{i,j}^{(n,m)} N_m(n,j)
			&=\frac{(n-i)!\,(n-j)!}{n!\,(n-k)!}
			\frac{(n+m-i)!\,(n+m-j)!}
			{(n+m)!\,(n+m-k)!}\cdot \frac{n! (n+m)! m!}{j! (m+j)! (n-j)! (n+m-j)!}\\
			&=\frac{(n-i)! (n+m-i)!m!}{j!(m+j)!(n-k)! (n+m-k)!}=N_m(n-i,j).
		\end{align*}
		It follows that
		\begin{align*}
			\sum_{i+j=k}\gamma_{i,j}^{(n,m)}f_ig_j=(-1)^k\sum_{i+j=k}a_{n-i}N_m(n-i,j)
			=(-1)^k\sum_{r=0}^{k}a_{n-r}N_m(n-r,k-r).
		\end{align*}
		By Lemma~\ref{sym}, $N_m(n-r,k-r)=N_m(n-r,n-k)$.
		Comparing with~\eqref{resu} therefore shows that
		$$
			R(x)=\left(f\RAC g\right)(x).
		$$
		
		Clearly, the roots of $f(x)$ and $g(x)$ are nonnegative and the leading coefficients $f_0=g_0=1.$
		Lemma~\ref{GM2.3} implies that $R(x)$ has only real nonnegative roots. 
		Since $$R(x)=(-1)^nq(-x),$$
 the roots of $q(x)$ are all nonpositive.
 This completes the proof.
	\end{proof}
	
	\begin{rem}
		The assumption that \(p(x)\in\mathbb{R}_{\ge 0}[x]\) cannot be omitted in Theorem~\ref{RZNARAYANA}.
		For example, let $p(x)=(x-4)(x-6)=x^2-10x+24$.
		Applying the transformation $\mathcal{T}_{N_1}$, with
		the first three Narayana polynomials $N_{0,1}(x)=1, N_{1,1}(x)=1+x$ and $N_{2,1}(x)=1+3x+x^2,$
		we have 
		$$
		\mathcal{T}_{N_1}(p(x))=N_{2,1}(x)-10N_{1,1}(x)+24 N_{0,1}(x)=x^2-7x+15,
		$$
		which is not real-rooted.
		\end{rem}

	\section{Proof of Theorem~\ref{rr}}
	Let $M=[M(n,k)]_{n,k\ge 0}$ and $B=[B(n,k)]_{n,k\ge 0}$ be two lower triangular matrices
	and $$C=MB=[C(n,k)]_{n,k\ge 0}.$$
	Let
	$$
	M_n(x) =\sum_{k=0}^{n} M(n,k) x^k, \quad B_n(x) =\sum_{k=0}^{n} B(n,k) x^k, \quad C_n(x) =\sum_{k=0}^{n} C(n,k) x^k
	$$
	denote the $n$-th RGFs of $M$, $B$ and $C$, respectively.
	In the first case of Theorem~\ref{rr} , we have $B(n,n)=1$. 
	In the second case, by the recurrence relation~\eqref{recur}, then $B(n,n)=a^n>0.$
	So $\deg(B_n(x))=n$ in both cases.
\begin{lem}\label{mul}
    	We have 
    	$$C_n(x)=\sum_{k=0}^{n} M(n,k) B_k(x).$$
\end{lem}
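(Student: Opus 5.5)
The identity is a direct expansion of the matrix product, so the plan is to compute the entries of $C=MB$ and then interchange two finite sums.

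First, since $M$ and $B$ are lower triangular, the product entries reduce to a finite sum,
\[
C(n,k)=\sum_{j\ge 0} M(n,j)B(j,k)=\sum_{j=k}^{n} M(n,j)B(j,k),
\]
because $M(n,j)=0$ for $j>n$ and $B(j,k)=0$ for $j<k$. In particular $C(n,k)=0$ for $k>n$, so $C$ is again lower triangular and $C_n(x)$ is a polynomial of degree at most $n$.

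Next, substitute this into the definition of $C_n(x)$ and swap the order of summation over the finite triangle $0\le k\le j\le n$:
\[
C_n(x)=\sum_{k=0}^{n}\sum_{j=k}^{n} M(n,j)B(j,k)x^k
=\sum_{j=0}^{n} M(n,j)\sum_{k=0}^{j}B(j,k)x^k
=\sum_{j=0}^{n} M(n,j)B_j(x).
\]
Renaming $j$ as $k$ gives the claimed formula.

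No step is a genuine obstacle; the only point requiring care is tracking the summation ranges from triangularity, so that the interchange is over a finite index set and the inner sum is exactly $B_j(x)$. The lemma says that $C_n(x)=\mathcal{T}_B(M_n(x))$, where $\mathcal{T}_B(x^k)=B_k(x)$. This is how Theorem~\ref{RZNARAYANA}, or the analogous result for the recurrence~\eqref{recur}, will be applied to $M_n(x)$. Iterating the lemma then handles $MB^{r}$.
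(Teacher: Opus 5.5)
Your proposal is correct and follows essentially the same approach as the paper: you write $C(n,k)=\sum_{j=k}^{n}M(n,j)B(j,k)$ using triangularity and then interchange the two finite sums, so that the inner sum becomes $B_j(x)$. Your additional remark that this gives $C_n(x)=\mathcal{T}_B(M_n(x))$, iterated for $MB^{r}$, is exactly how the paper uses the lemma.
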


\begin{proof}
    	A direct computation yields 
    	\begin{align*}
    		C_n(x)&=\sum_{k=0}^{n} C(n,k) x^k=\sum_{k=0}^{n} \sum_{i=k}^{n} M(n,i) B(i,k) x^k\\
    		&=\sum_{i=0}^{n} M(n,i) \sum_{k=0}^{i}  B(i,k) x^k=\sum_{i=0}^{n} M(n,i) B_i(x).
    	\end{align*}
   This proves the result.
\end{proof}
	Suppose that $M=[M(n,k)]_{n,k\ge 0}$ is a lower triangular matrix whose RGFs have only real roots.
	If the basis transformation $\mathcal{T}_B(x^n)=B_n(x)$ preserves real-rootedness, where
    $B_n(x)$ is the $n$-th RGF of $B$ and $\deg(B_n(x))=n$, 
then the RGFs of $MB$ have only real roots.
We next recall some basis transformations that preserve real-rootedness. 

\begin{lem}\cite[Theorem 2.4.2]{Bre89}\label{SST2}
	 	Let
	 	$
	 	f(x)=\sum_{k=0}^{n} a_k \left\langle x\right\rangle_k
	 	$
	 	be a polynomial having only real nonpositive roots,
	 	where $\left\langle x\right\rangle_k=x(x-1)\cdots(x-k+1)$. Then
	 	the polynomial
	 	$
	  \sum_{k=0}^{n} a_k x^k
	 	$
	 	has only real nonpositive roots.
\end{lem}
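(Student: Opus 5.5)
The plan is to realize the passage $\langle x\rangle_k\mapsto x^k$ analytically, through the exponential generating function of the values $f(j)$, and then invoke the Laguerre--P\'olya theory of multiplier sequences. The starting point is the identity
\[
\sum_{j\ge 0}\langle j\rangle_k\,\frac{x^j}{j!}=x^k e^{x},\qquad k\ge 0,
\]
which holds because $\langle j\rangle_k/j!=1/(j-k)!$ for $j\ge k$ and $\langle j\rangle_k=0$ for $j<k$. Multiplying by $a_k$ and summing over $k$ gives
\[
\sum_{k=0}^{n}a_kx^k \;=\; e^{-x}\sum_{j\ge 0} f(j)\,\frac{x^j}{j!}.
\]
The statement therefore reduces to showing that the entire function $\Phi(x)=\sum_{j\ge0}f(j)x^j/j!$ lies in the Laguerre--P\'olya class.

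Next I factor $f(x)=c\prod_{i=1}^{n}(x+r_i)$ with $r_i\ge 0$. The leading coefficient is $c=a_n$, and the case $n=0$ is trivial. I may assume $c>0$, so that $f(j)\ge 0$ for every integer $j\ge 0$. Since $x\frac{d}{dx}$ acts on $x^j$ as multiplication by $j$, we get
\[
\Phi(x)=c\prod_{i=1}^{n}\Bigl(x\frac{d}{dx}+r_i\Bigr)e^{x}.
\]
So it suffices to know that each operator $x\frac{d}{dx}+r$ with $r\ge 0$ maps the class $\mathcal{L}\text{-}\mathcal{P}^{+}$ into itself. Here $\mathcal{L}\text{-}\mathcal{P}^{+}$ denotes the entire functions that are locally uniform limits of polynomials with only real nonpositive roots, and $e^{x}$ belongs to it. Equivalently, $\{k+r\}_{k\ge0}$ is a multiplier sequence, which is the classical P\'olya--Schur/Laguerre statement that $\{f(k)\}_{k\ge0}$ is a multiplier sequence whenever $f$ has only real nonpositive roots.

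To verify the operator claim directly, I first take a polynomial $h$ with only real nonpositive roots. Then $xh'+rh=x^{1-r}\bigl(x^{r}h\bigr)'$, and Rolle's theorem applied on $(-\infty,0]$ gives a root of $(x^rh)'$ strictly between consecutive roots of $x^rh$. Together with multiplicities at the roots and the root at $0$ when $r>0$, this produces enough nonpositive roots, so the degree count shows that $xh'+rh$ again has only real nonpositive roots. The extension to $\mathcal{L}\text{-}\mathcal{P}^{+}$ then follows by Hurwitz's theorem, applied to the approximating polynomials $(1+x/N)^N\to e^x$. In the end $\Phi\in\mathcal{L}\text{-}\mathcal{P}^{+}$, and hence $e^{-x}\Phi(x)$ is in the Laguerre--P\'olya class, being the limit of $(1-x/N)^N\,\Phi_N(x)$ with $\Phi_N=c\prod_i\bigl(x\frac{d}{dx}+r_i\bigr)(1+x/N)^N$. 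Since $e^{-x}\Phi(x)=\sum_k a_kx^k$ is a polynomial, its zeros are exactly the zeros of $\Phi$, and these are real.

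Finally I need the roots to be nonpositive. They are real, and $\Phi$ has nonnegative Taylor coefficients $f(j)/j!$, so $\Phi(x)>0$ for $x>0$ unless $f(j)=0$ for all $j$, which is impossible. Hence $\sum_k a_kx^k$ has no positive zeros. I expect the main technical obstacle to be the limiting argument, namely justifying that the operators and the factor $e^{-x}$ interact correctly with locally uniform limits. Should that become messy, I would instead cite the P\'olya--Schur characterization of multiplier sequences directly.
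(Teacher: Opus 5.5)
The paper gives no proof of this lemma; it quotes it from Brenti's memoir, so there is no internal argument to compare against. Your proposal is a correct, self-contained proof along the classical lines: the identity $\sum_k a_kx^k=e^{-x}\sum_{j\ge0}f(j)x^j/j!$, combined with Laguerre's theorem that $\{f(j)\}_{j\ge0}$ is a multiplier sequence when $f$ has only real nonpositive zeros. This is the standard route to the cited result.

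There are two small points to tidy.

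First, in the Rolle step $x^r$ is not real-valued on $(-\infty,0)$ when $r\notin\mathbb{Z}$. Use instead $xh'+rh=-(-x)^{1-r}\frac{d}{dx}\bigl((-x)^rh(x)\bigr)$ for $x<0$. Your count then goes through: multiplicity $m_\ell-1$ at each negative root $-s_\ell$ of multiplicity $m_\ell$, and one Rolle root between consecutive negative roots. There is also one Rolle root in $(-s_1,0)$, because $(-x)^rh(x)\to0$ as $x\to0^-$ when $r>0$. Adding the multiplicity $m_0$ at $0$ gives exactly $\deg h$ nonpositive roots. For $r=0$, simply apply Rolle to $h'$ and multiply by $x$.

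Second, the detour through $(1-x/N)^N\Phi_N$ is unnecessary. Each $\Phi_N$ has all its zeros in $(-\infty,0]$, and $\Phi_N\to\Phi$ locally uniformly, since differentiation commutes with locally uniform limits, so the limiting step you were worried about is harmless. Hurwitz's theorem on the slit plane $\mathbb{C}\setminus(-\infty,0]$ then shows that $\Phi\not\equiv0$ has all its zeros in $(-\infty,0]$. Since $e^{-x}$ is zero-free, the same holds for $\sum_k a_kx^k=e^{-x}\Phi(x)$, which gives reality and nonpositivity in one step. Your separate positivity argument via the nonnegative Taylor coefficients of $\Phi$ is nevertheless correct.
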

The Stirling numbers of the second kind $S_2(n,k)$ satisfy the recurrence relation
$$
S_2(n,k)=S_2(n-1,k-1)+k\,S_2(n-1,k), \quad S_2(0,0)=1.
$$
	Let
	$
	S_{n,2}(x)=\sum_{i=0}^{n} S_2(n,i) x^i
	$
	be the \(n\)-th RGF of $[S_2(n,k)]_{n,k\ge0}$.
	By the recurrence relation, 
	the exponential generating function of $\{S_{n,2}(x)\}_{n\ge 0}$ is 
	\begin{equation}\label{egfs2}
	\sum_{n\geq 0}S_{n,2}(x)\frac{z^n}{n!}
	=
	\exp\bigl(x(e^z-1)\bigr),
	\end{equation}
	see, for instance,~\cite{Com74}.
	Note that
	$$
	x^n=\sum_{k=0}^n S_2(n,k) \left\langle x\right\rangle_k.
	$$
	The following corollary follows immediately from Lemma~\ref{SST2}.
	\begin{coro}\label{ST2}
		Let
		$
		f(x)=\sum_{k=0}^{n}a_kx^k\in \mathbb{R}_{\ge 0}[x]
		$
        be a polynomial having only real roots. Then the polynomial
		$
		\sum_{k=0}^{n}
		a_k S_{k,2}(x)
		$
		has only real nonpositive
		roots.
	\end{coro}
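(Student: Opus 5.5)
The plan is to apply Lemma~\ref{SST2} to a suitably chosen polynomial written in the falling-factorial basis. Since $f(x)=\sum_{k=0}^n a_kx^k$ has nonnegative coefficients and only real roots, all of its roots are nonpositive; assuming $a_n>0$ and $n\ge 1$ (the degenerate cases are trivial), we may write $f(x)=a_n\prod_{i=1}^n(x+\lambda_i)$ with $\lambda_i\ge 0$. The target is
\[
F(x)=\sum_{k=0}^n a_k S_{k,2}(x)=\sum_{k=0}^n a_k\sum_{j=0}^k S_2(k,j)x^j .
\]

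First express $F$ through the identity $x^k=\sum_j S_2(k,j)\langle x\rangle_j$, i.e.\ through the linear map $\Phi:\langle x\rangle_j\mapsto x^j$. Applying $\Phi$ to $x^k$ gives $\Phi(x^k)=\sum_j S_2(k,j)x^j=S_{k,2}(x)$. By linearity, $F=\Phi(f)$, or in other words
\[
f(x)=\sum_{j=0}^n\Bigl(\sum_{k} a_kS_2(k,j)\Bigr)\langle x\rangle_j .
\]
Thus $f$ is written in the falling factorial basis with coefficients $b_j=\sum_k a_kS_2(k,j)$. Since $f$ has only real nonpositive roots, Lemma~\ref{SST2} applies directly and shows that $\sum_j b_jx^j=F(x)$ has only real nonpositive roots.

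The only genuine hypothesis check is that $f$ itself has real nonpositive roots. This follows from nonnegativity of the coefficients: a positive real number cannot be a root of a nonzero polynomial with nonnegative coefficients. No step is a real obstacle, since the content lies entirely in Lemma~\ref{SST2}. The main care needed is the bookkeeping identifying $\Phi(x^k)$ with $S_{k,2}(x)$, which is exactly the stated expansion of $x^n$ in falling factorials.
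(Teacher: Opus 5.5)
Your proposal is correct and is essentially the paper's argument. You expand $f$ in the falling factorial basis using $x^k=\sum_j S_2(k,j)\langle x\rangle_j$, and note that nonnegative coefficients together with real-rootedness force all roots of $f$ to be nonpositive; Lemma~\ref{SST2} then applies directly. The paper says the corollary "follows immediately" from Lemma~\ref{SST2}, and you have simply written out the bookkeeping it leaves implicit.
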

	
	Let $\mu>0$. Define
	$$(x|\mu)_n:=x (x+\mu)\cdots(x+(n-1)\mu).$$ 
	Suppose that 
	$(x|\mu)_n=\sum_{k=0}^{n}S_{1,\mu}(n,k)x^k.$
	One obtains the recurrence relation
	$$
	S_{1,\mu}(n,k)=S_{1,\mu}(n-1,k-1)+(n-1) \,\mu\cdot S_{1,\mu}(n-1,k).
	$$
	When $\mu=1$, the entries $S_{1,1}(n,k):=S_1(n,k)$ are the unsigned Stirling numbers of the first kind.
	Brenti~\cite[Theorem 2.4.3]{Bre89} proved that the transformation $x^n\mapsto (x)_n=x(x+1)\ldots(x+n-1)$  preserves real-rootedness.
	Su, Yang and Zhang subsequently generalized this result.

	\begin{lem}\cite[Theorem 4]{Su13}\label{ST1}
	Let
	$
	f(x)=\sum_{k=0}^{n} a_kx^k\in\mathbb{R}_{\ge 0}[x]
	$
	be a polynomial having only real roots. Then the polynomial
	$
	\sum_{k=0}^{n} a_k (x|\mu)_k
	$
	has only simple, real, nonpositive roots for $\mu> 0.$
	\end{lem}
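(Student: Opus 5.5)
The plan is to reduce to the case $\mu=1$ by rescaling, and then to prove that case by induction on the linear factors of $f$, using a shift identity for rising factorials.

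\emph{Reduction to $\mu=1$.} Write $(y)_k=y(y+1)\cdots(y+k-1)$. We have
\[
(x|\mu)_k=\mu^k\,(x/\mu)_k,
\qquad\text{so}\qquad
\sum_{k=0}^n a_k(x|\mu)_k=\sum_{k=0}^n (a_k\mu^k)\,(y)_k\Big|_{y=x/\mu}.
\]
The polynomial $\sum_k a_k\mu^k y^k=f(\mu y)$ still has nonnegative coefficients and only real roots. Also $x\mapsto x/\mu$ with $\mu>0$ preserves being real, nonpositive and simple. So it suffices to treat $\mu=1$, i.e.\ the operator $T(x^k)=(x)_k$. Real-rootedness in this case is Brenti's \cite[Theorem 2.4.3]{Bre89}. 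For simplicity of the roots I would reprove it directly, as follows.

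\emph{Induction for $\mu=1$.} The key identity is $(x)_{k+1}=x\,(x+1)_k$. Hence, for every polynomial $h$,
\[
T(xh)(x)=x\,T(h)(x+1).
\]
Write $f(x)=c\prod_{i=1}^n(x+\lambda_i)$ with $\lambda_i\ge 0$, and add one factor at a time. If $G=T(h)$, then
\[
T\bigl((x+\lambda)h\bigr)(x)=x\,G(x+1)+\lambda\,G(x).
\]
The induction hypothesis will be strengthened to: $G$ has positive leading coefficient and its roots $0\ge r_1>r_2>\cdots>r_d$ satisfy $r_{i-1}-r_i\ge 1$. The base case $G=1$ is trivial. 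The case $h=x^d$ gives the extremal spacing $0,-1,\dots,-(d-1)$.

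\emph{Inductive step.} Let $H(x)=xG(x+1)+\lambda G(x)$. At a root $r_i$ of $G$ we have $H(r_i)=r_iG(r_i+1)$. Because the gaps are at least $1$, the point $r_i+1$ lies in $(r_i,r_{i-1}]$, and on this interval $G$ has a constant sign, the one that alternates with $i$. I will do the sign bookkeeping on the intervals $(r_{i+1},r_i)$ to produce $d-1$ interior roots of $H$. One more root comes from comparing the sign of $H$ below $r_d$ with its behaviour at $-\infty$. The last one lies in $[r_1,0]$: there $H(0)=\lambda G(0)$ and $H(r_1)=r_1G(r_1+1)$. That gives $d+1$ real roots, all nonpositive since $H$ has nonnegative coefficients. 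The cases $\lambda=0$ and $r_i=0$ are degenerate: for $\lambda=0$ the roots are simply $0$ together with $r_i-1$, and the spacing is visibly preserved.

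\emph{Main obstacle.} The delicate part is showing that the new roots again have gaps $\ge 1$, which is what keeps the induction going and makes the roots simple. The first thing I would try is to show that each new root $s_i$ lies in $[r_i-1,r_i]$ or a similar window shifted by one. I would obtain this by evaluating $H$ at $r_i-1$ as well as at $r_i$, since $H(r_i-1)=(r_i-1)G(r_i)+\lambda G(r_i-1)=\lambda G(r_i-1)$. This pins each root of $H$ between consecutive points of $\{r_i\}\cup\{r_i-1\}$, and with that localization spacing $\ge1$ should follow. If this window argument fails, the fallback is to cite \cite[Theorem 2.4.3]{Bre89} for real-rootedness. Simplicity would then be argued separately, from the fact that $T$ maps polynomials with nonnegative coefficients to polynomials whose consecutive roots differ by at least $1$.
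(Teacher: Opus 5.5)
The paper does not prove this lemma; it imports it from \cite{Su13} as a generalization of \cite[Theorem 2.4.3]{Bre89}. Your argument is therefore a genuinely different, self-contained route, and the route is sound. The rescaling $(x|\mu)_k=\mu^k(x/\mu)_k$ shows that, for real-rootedness, the $\mu$-version is just the case $\mu=1$ applied to $f(\mu y)$. The identity $T(xh)(x)=x\,T(h)(x+1)$, combined with the invariant ``consecutive roots at distance $\ge 1$'', is what accounts for the word \emph{simple} in the statement.

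As written, though, the decisive step aims at the wrong window. Your fallback is also circular: the spacing property it invokes is exactly what the induction is supposed to establish.

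\emph{The correct localization.} Take $\lambda>0$, $r_1<0$ and all gaps $>1$, and set $r_0=+\infty$, $r_{d+1}=-\infty$. Then $G$ has sign $(-1)^i$ on $(r_{i+1},r_i)$. Hence $H(r_i)=r_iG(r_i+1)$ and $H(r_i-1)=\lambda G(r_i-1)$ have the \emph{same} sign $(-1)^i$, so there is no root in $[r_i-1,r_i]$.
\begin{itemize}
\item Comparing $H(r_{i+1})$ with $H(r_i-1)$ gives a root $s_i\in(r_{i+1},r_i-1)$ for $1\le i\le d-1$.
\item From $H(r_1)<0<\lambda G(0)=H(0)$ you get $s_0\in(r_1,0)$.
\item From the behaviour at $-\infty$ you get $s_d\in(-\infty,r_d-1)$.
\end{itemize}
These intervals are disjoint, so this accounts for all $d+1$ roots, each simple. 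Moreover $s_{i-1}-s_i>r_i-(r_i-1)=1$, so the spacing invariant propagates, even strictly.

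\emph{The degenerate cases.} These are your case $r_1=0$ with $\lambda>0$, and also gaps exactly $1$, where $H(r_i)=0$. They are easiest to avoid altogether. Write $f=x^jg$ with $g(0)>0$ and use $T(x^jg)(x)=(x)_j\,T(g)(x+j)$. Run the induction only over the factors $x+\lambda$ with $\lambda>0$; there $r_1<0$ and all gaps stay $>1$. The roots of $T(f)$ are then $0,-1,\dots,-(j-1)$ together with the numbers $r_i-j<-j$, which again have mesh $\ge 1$.
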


	\begin{proof}[\textbf{Proof of Theorem~\ref{rr}}]
		For \(n\geq 0\), define the RGF of the \(n\)-th row of
		\(B\) by
		\[
		B_n(x)=\sum_{k=0}^n B(n,k)x^k .
		\]
		Multiplying the recurrence relation~\eqref{recur} by \(x^k\) and summing over \(k\),
		we obtain that 
		\begin{equation}\label{rec-rgf}
		B_n(x)
		=
		(ax+c(n-1)+d)B_{n-1}(x)+bxB_{n-1}'(x),
		\qquad B_0(x)=1.
		\end{equation}
Without loss of generality, consider
		$$
		f(x)=\sum_{k=0}^n u_k x^k=\prod_{i=1}^n (x+r_i),
		\qquad
 r_i\geq 0.
		$$
		Define
		$$
		\mathcal{T}_B(f(x))
		=
		\sum_{k=0}^n u_k B_k(x).
		$$
		We now distinguish three cases.
	
		\medskip
		\noindent
		\textbf{Case 1: \(b=c=0\).}
		The recurrence relation~\eqref{rec-rgf} becomes
		\[
		B_n(x)=(ax+d)B_{n-1}(x),
		\]
		and hence
		$
		B_n(x)=(ax+d)^n.
		$
		Therefore,
		\[
		\mathcal{T}_B(f(x))
		=
		\sum_{k=0}^n u_k(ax+d)^k
		=
		f(ax+d).
		\]
		Thus $\mathcal{T}_B(f(x))$ has only real nonpositive roots.

\medskip
\noindent
		\textbf{Case 2: \(b>0\) and \(c=0\).}
		The recurrence relation~\eqref{rec-rgf} becomes
		\begin{equation}\label{Bn}
		B_n(x)
		=
		(ax+d)B_{n-1}(x)+bxB_{n-1}'(x).
		\end{equation}
		Let 
		$
		F(x,z)=\sum_{n\geq 0}B_n(x)\frac{z^n}{n!}
		$
		be the exponential generating function of
		\(\{B_n(x)\}_{n\geq 0}\).
		Multiplying both sides of~\eqref{Bn} by \(z^{n-1}/(n-1)!\) and summing over \(n\geq 1\), we obtain
		$$
		\sum_{n\geq 1}B_n(x)\frac{z^{n-1}}{(n-1)!}
		=
		(ax+d)\sum_{n\geq 1}B_{n-1}(x)\frac{z^{n-1}}{(n-1)!}
		+
		bx\sum_{n\geq 1}B'_{n-1}(x)\frac{z^{n-1}}{(n-1)!}.
		$$
		Since
		$$
		\sum_{n\geq 1}B_n(x)\frac{z^{n-1}}{(n-1)!}
		=
		\frac{\partial F(x,z)}{\partial z}\quad \textrm{and} \quad
		\sum_{n\geq 1}B_{n-1}(x)\frac{z^{n-1}}{(n-1)!}
		=
		F(x,z),
		$$
		while
		$$
		\sum_{n\geq 1}B'_{n-1}(x)\frac{z^{n-1}}{(n-1)!}
		=
		\frac{\partial F(x,z)}{\partial x},
		$$
		it follows that $F(x,z)$ satisfies the partial differential equation
		$$
		\frac{\partial F(x,z)}{\partial z}
		=
		(ax+d)F(x,z)+bx\frac{\partial F(x,z)}{\partial x}.
		$$
	    Solving this equation with \(F(x,0)=1\)
		\begin{equation}\label{rec-rgf-2}
		F(x,z)=\sum_{n\geq 0} B_n(x)\frac{z^n}{n!}
		=
		\exp\left(
		dz+\frac{a}{b}x(e^{bz}-1)
		\right).
      \end{equation}
		Recall from~\eqref{egfs2} that
		\[ \exp\left(\frac{a}{b}x(e^{bz}-1)\right) = \sum_{m\geq 0} S_{m,2}\left(\frac{a}{b}x\right)\frac{(bz)^m}{m!}. \] 
		By~\eqref{rec-rgf-2}, 
		we have 
		\[ F(x,z) = e^{dz} \sum_{m\geq 0} S_{m,2}\left(\frac{a}{b}x\right)\frac{(bz)^m}{m!} = \left(\sum_{r\geq 0}d^r\frac{z^r}{r!}\right) \left(\sum_{m\geq 0}b^mS_{m,2}\left(\frac{a}{b}x\right)\frac{z^m}{m!}\right). \] 
		Comparing the coefficients of \(z^k/k!\) on both sides gives \[ B_k(x) = \sum_{m=0}^{k} \binom{k}{m} d^{k-m}b^m S_{m,2}\left(\frac{a}{b}x\right). \]
		Hence,
		\begin{align*}
		\mathcal{T}_B(f(x))&=\sum_{k=0}^n u_k B_k(x)=\sum_{k=0}^n u_k \sum_{m=0}^k
		\binom{k}{m} d^{\,k-m}b^m
			S_{m,2}\!\left(\frac{a}{b}x\right).
		\end{align*}
	    Since 
		$$
		f(d+bx)
		=
		\sum_{k=0}^{n}u_k(d+bx)^k
		=\sum_{k=0}^n 	u_k  \sum_{m=0}^k
		\binom{k}{m} d^{\,k-m}b^m  x^m
		$$	
		has only real nonpositive roots.
	    By Corollary~\ref{ST2},
	    the polynomial 
	    $$\sum_{k=0}^n 	u_k  \sum_{m=0}^k
	    \binom{k}{m} d^{\,k-m}b^m S_{m,2}\!\left(x\right)
	    $$
	    has only  real nonpositive roots.
	    Replacing $x$ with 
	    $\frac{a}{b}x$ preserves real-rootedness and nonpositivity of the roots.
	    Then we obtain that 
	    $\mathcal{T}_B(f(x))$ has only  real nonpositive roots.

		\medskip
		
		\noindent
		\textbf{Case 3: \(b=0\) and \(c>0\).}
		The recurrence relation~\eqref{rec-rgf} becomes
		\[
		B_n(x)=(ax+c(n-1)+d)B_{n-1}(x).
		\]
		Hence
		\[
		B_n(x)=\prod_{j=0}^{n-1} (ax+cj+d)=\left(ax+d|c\right)_n.
		\]
Then we obtain that
\begin{align*}
	    	\mathcal{T}_B(f(x))=\sum_{k=0}^n u_k B_k(x)=\sum_{k=0}^n u_k 
	    	\left(ax+d|c\right)_k.
\end{align*}
By Lemma~\ref{ST1}, 
the polynomial $\sum_{k=0}^n u_k 
\left(x|c\right)_k$
has only real nonpositive roots.
Taking $x\mapsto ax+d$,
we obtain that 
$
\mathcal{T}_B(f(x))
$
has only real nonpositive roots.

		Combining the three cases, 
		the basis transformation $x^k\mapsto B_k(x)$ preserves the property of having only real nonpositive roots,
		where $B_k(x)$ is the $k$-th RGF of $B$.
		By Theorem~\ref{RZNARAYANA}, the basis transformation $x^k\mapsto N_{k,m}(x)$  preserves real-rootedness.
		Then by Lemma~\ref{mul}, 
if the RGFs of a matrix $M$ have only real nonpositive roots, then
so do the RGFs of $MB$.
Iterating this argument, we obtain that the RGFs of $MB^{\,r}$ have only real nonpositive roots.
This completes the proof.
	\end{proof}

	For the parameter choices 
	$(a,b,c,d)=(1,0,0,1), (1,1,0,0)$ and $(1,0,1,0),$
	the matrix $B$ is, respectively, Pascal's triangle $P$, the Stirling triangle of the second kind $S_2$ and 
	the unsigned Stirling triangle of the first kind $S_1$.
	It is known~\cite{EGS19} that $S_1\cdot S_2$ is the triangle of unsigned Lah numbers.
	By Theorem~\ref{rr},
	the RGFs of the triangle of unsigned Lah numbers have only real roots.
	Moreover, we obtain the following result.

\begin{coro}
If the RGFs of a matrix $M$ have only real nonpositive roots, then 
so do the RGFs of $MB_1B_2\cdots B_s$, where each $B_i$ is one of 
Pascal's triangle, the Stirling triangle of the second kind,
the unsigned Stirling triangle of the first kind, the Narayana  triangle of type $A$ or type $B$.
\end{coro}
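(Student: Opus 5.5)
This follows by iterating Theorem~\ref{rr}, one factor at a time. The key point is that the proof of Theorem~\ref{rr} gives more than its statement. For each admissible $B$, the basis transformation $\mathcal{T}_B$ sends every polynomial in $\mathbb{R}_{\ge 0}[x]$ with only real roots to a polynomial with only real nonpositive roots. Combined with Lemma~\ref{mul}, this shows that right multiplication by $B$ preserves the property ``all RGFs have only real nonpositive roots''. So I would not appeal only to the statement of Theorem~\ref{rr}, which handles powers of a single $B$. Instead I would isolate this one-step consequence and apply it $s$ times.

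\textbf{Step 1.} Check that each allowed factor is admissible.
\begin{itemize}
\item Pascal's triangle $P$, the Stirling triangle of the second kind $S_2$, and the unsigned Stirling triangle of the first kind $S_1$ satisfy the recurrence~\eqref{recur}. The parameters are $(a,b,c,d)=(1,0,0,1)$, $(1,1,0,0)$ and $(1,0,1,0)$ respectively. In each case $a>0$, $bc=0$ and $b,c,d\ge0$.
\item The Narayana triangles of types $A$ and $B$ are the coefficient matrices of $\{N_{n,1}(x)\}$ and $\{N_{n,0}(x)\}$. They therefore fall under the first bullet of Theorem~\ref{rr}.
\end{itemize}
In every case $\deg B_n(x)=n$ and $B(n,n)>0$.

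\textbf{Step 2.} Induct on $s$. Set $M_0=M$ and $M_i=M_{i-1}B_i$, so that $M_s=MB_1B_2\cdots B_s$.

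Suppose the RGFs of $M_{i-1}$ have only real nonpositive roots. Then each row $\sum_k M_{i-1}(n,k)x^k$ is, up to sign, a constant times $\prod(x+r_j)$ with $r_j\ge0$.

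The main obstacle is the sign and degree normalisation. The transformation results (Theorem~\ref{RZNARAYANA}, Corollary~\ref{ST2}, Lemma~\ref{ST1}, and Case~1) require nonnegative coefficients. I would handle this as follows.
\begin{itemize}
\item Multiply each row by $\pm1$. This does not affect real-rootedness and makes the coefficients nonnegative.
\item Note that a row of degree $d<n$ simply has leading coefficient in position $d$. The transformation results apply to any degree, since Theorem~\ref{RZNARAYANA} is stated for $\deg p=n$ with arbitrary $n$.
\item A zero row is trivial.
\end{itemize}

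By Lemma~\ref{mul}, the $n$-th RGF of $M_{i-1}B_i$ equals $\mathcal{T}_{B_i}$ applied to the $n$-th RGF of $M_{i-1}$. The relevant transformation result then shows that it has only real nonpositive roots, where one should read ``a nonzero constant has no roots'' as satisfying the condition.

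\textbf{Step 3.} Conclude. After $s$ steps, the RGFs of $MB_1\cdots B_s$ have only real nonpositive roots. The order of the factors is irrelevant, because each step only uses the preservation property of the next single factor.

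I expect no real obstacle beyond stating clearly that the proof of Theorem~\ref{rr} establishes the single-factor preservation property, not just the statement about powers of one fixed $B$.
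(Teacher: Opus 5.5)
Your proposal is correct and follows the paper's route. The corollary comes from iterating the one-step fact, established in the proof of Theorem~\ref{rr} via Lemma~\ref{mul}, that right multiplication by any admissible $B$ preserves the property that all RGFs have only real nonpositive roots. You do not need to go inside that proof, though: the $r=1$ case of Theorem~\ref{rr} already applies to an arbitrary lower triangular $M$, so applying it in turn to $M, MB_1, MB_1B_2,\ldots$ (all lower triangular) gives the result directly.
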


	Panzone~\cite{Pan23} proved that for the Stirling triangle of the second kind $S_2$,
    the RGFs of $S_2^2$ and $S_2^3$ have only real nonpositive roots.
	We generalize this result.

	\begin{coro}\label{ke}
	For every $r\in\mathbb{Z}_{\geq0}$, the RGFs of $B^r$ have only real nonpositive roots, where
	$B$ is any one of Pascal's triangle, the Stirling triangle of
	the second kind, the unsigned Stirling triangle of the first
	kind, or a Narayana triangle of type $A$ or type $B$.
	\end{coro}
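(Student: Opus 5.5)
The plan is to apply Theorem~\ref{rr} with $M=I$, the identity matrix, and $B$ the triangle in question. The case $r=0$ is immediate: $B^0=I$, and its $n$-th RGF is $x^n$, whose only root is $0$. For $r\ge 1$ we write $B^r=I\cdot B^r$. The RGFs of $I$ are $x^n$, which have only real nonpositive roots, so $M=I$ meets the hypothesis of Theorem~\ref{rr}.

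It then remains to check that each listed $B$ falls under one of the two cases of Theorem~\ref{rr}.

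\textbf{Recurrence case.} By the remark after the proof of Theorem~\ref{rr}, the recurrence~\eqref{recur} with the parameter choices $(a,b,c,d)=(1,0,0,1)$, $(1,1,0,0)$ and $(1,0,1,0)$ gives Pascal's triangle, $S_2$ and $S_1$, respectively. In each case $a>0$, $bc=0$ and $b,c,d\ge 0$, with $B(0,0)=1$.
\begin{itemize}
\item For Pascal's triangle, the recurrence $B(n,k)=B(n-1,k-1)+B(n-1,k)$ is exactly Pascal's rule.
\item For $S_2$, the recurrence $B(n,k)=B(n-1,k-1)+k\,B(n-1,k)$ is the recurrence stated above for the Stirling numbers of the second kind.
\item For $S_1$, the recurrence $B(n,k)=B(n-1,k-1)+(n-1)B(n-1,k)$ is the recurrence of $S_{1,\mu}$ with $\mu=1$.
\end{itemize}

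\textbf{Narayana case.} The Narayana triangles of types $A$ and $B$ are the coefficient matrices of $\{N_{n,1}(x)\}_{n\ge0}$ and $\{N_{n,0}(x)\}_{n\ge0}$, by the identities $N^A_n=N_{n,1}$ and $N^B_n=N_{n,0}$ noted after~\eqref{eqf}. So they fall under the first case of Theorem~\ref{rr}.

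Theorem~\ref{rr} then gives that the RGFs of $IB^r=B^r$ have only real nonpositive roots for all $r\ge 1$. There is no genuine obstacle. The only points needing care are to handle $r=0$ separately, since Theorem~\ref{rr} covers only $r\ge 1$, and to confirm that each named triangle matches its parameter specialization, including the row and column indexing convention starting at $0$.
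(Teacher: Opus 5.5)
Your proposal is correct and is essentially the argument the paper intends; the paper states the corollary without a written proof, as a direct consequence of Theorem~\ref{rr}. You take $M$ to be the identity matrix, whose RGFs $x^n$ have only the root $0$, use the parameter choices $(a,b,c,d)=(1,0,0,1),(1,1,0,0),(1,0,1,0)$ and $m=1,0$ for the five triangles, and treat $r=0$ separately.
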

	
   \begin{rem}
   	The conclusion of Theorem~\ref{rr} does not remain true if the matrix is multiplied on the left.
   	Let $M_0(x)=1, M_1(x)=2+x$ and $M_2(x)=2+3x+x^2$.
   	All three polynomials have only real nonpositive roots.
   	However, $M_0(x)+2M_1(x)+M_2(x)=7+5x+x^2$,
   	whose roots are not real.
   	Thus,
   	left multiplication by Pascal's triangle  does not necessarily preserve the property that all RGFs  have only real nonpositive roots.
   	\end{rem}

	\section{Concluding remarks and open problems}
	
	The Eulerian number $A(n,k)$ counts the number of $n$-permutations with $k-1$ descents. 
	The Eulerian triangle begins as follows
	$$A=[A(n,k)]_{n,k\ge 0}=
	\left[\begin{array}{rrrrrr}
		1 &  &  &  &  &  \\
		0 & 1 &   &   &   &   \\
		0 & 1 & 1 &   &   &   \\
		0 &1 &4 & 1 &   &   \\
		\vdots &  & &  &  \ddots \\
	\end{array}\right].
	$$
The entries satisfy the recurrence relation
	$$
	A(n,k)=(n-k+1)A(n-1,k-1)+kA(n-1,k).
	$$
	Br\"and\'en and Jochemko~\cite{BJ22} showed that the Eulerian transformation does not preserve real-rootedness.
Motivated by Corollary~\ref{ke},
we computed the first 30 RGFs of $A^2$
and found that all of them are real-rooted.
	
	The Delannoy number $D(n,k)$ counts the number of lattice paths from $(0,0)$ to $(n-k,k)$ using steps $(1,0),(0,1)$,
	and $(1,1)$. 
	The Delannoy triangle is of the form
	$$D=[D(n,k)]_{n,k\ge 0}=
	\left[\begin{array}{rrrrrr}
		1 &  &  &  &  &  \\
		1 & 1 &   &   &   &   \\
		1 & 3 & 1 &   &   &   \\
		1 &5 & 5 & 1 &   &   \\
		\vdots &  & &  &  \ddots \\
	\end{array}\right],
	$$
	and the entries satisfy the recurrence relation
	$$
	D(n,k)=D(n-2,k-1)+D(n-1,k-1)+D(n-1,k).
	$$
	Let
	$
	P(x)=(x+3)^3=x^3+9x^2+27x+27.
	$
	Under the Delannoy basis transformation,
	the polynomial
	$$
	\begin{aligned}
		&D_3(x)+9D_2(x)+27D_1(x)+27D_0(x)\\
		=&(1+5x+5x^2+x^3)
		+9(1+3x+x^2)
		+27(1+x)+27\\
		=&x^3+14x^2+59x+64
	\end{aligned}
	$$
	does not have only real roots.
    It is natural to ask under what conditions the Delannoy transformation preserves real-rootedness.
	
	A proper Riordan array, 
	denoted by $(g(t),f(t))$, 
	is an infinite lower triangular matrix whose ordinary generating function of the $k$-th column is $g(t)(f(t))^k$, 
	where $g(0) \ne 0$, $f(0) = 0$ and $f'(0) \ne 0$. 
	The set of all Riordan arrays forms the Riordan group under matrix multiplication,
	\begin{equation}\label{FTRA}
		(g(t),f(t))(d(t),h(t))=(g(t)d(f(t)),h(f(t))),
	\end{equation}
	see, for instance,~\cite{SSB22}.
	It is known~\cite{CW19} that the Delannoy triangle $D$ is a Riordan array 
\[
D=\left(\frac{1}{1-t},\frac{t(1+t)}{1-t}\right).
\]

Let
$
G=D^2.
$
By~\eqref{FTRA},
we have
\[
G=
\left(
\frac{1}{1-2t-t^2},
\frac{t(1+t)(1+t^2)}
{(1-t)(1-2t-t^2)}
\right).
\]
Define the $n$-th RGF of $G$ by
$
G_n(x)=\sum_{k=0}^{n}G(n,k)x^k.
$
Thus, its bivariate generating function is
\begin{align*}
F(x,t)=\sum_{n\ge 0}G_n(x)t^n
&=
\frac{\frac{1}{1-2t-t^2}}
{1-x\frac{t(1+t)(1+t^2)}
    	{(1-t)(1-2t-t^2)}}
=
\frac{1-t}
{(1-t)(1-2t-t^2)-xt(1+t)(1+t^2)}\\
&=\frac{1-t}
{1-(x+3)t+(1-x)t^2+(1-x)t^3-xt^4}.
\end{align*}
Hence,
$$
\left(
1-(x+3)t+(1-x)t^2+(1-x)t^3-xt^4
\right)F(x,t)
=
1-t.
$$
Equating coefficients of  $t^n$, we obtain that
$$
G_n(x)=(x+3)G_{n-1}(x)
    	+(x-1)G_{n-2}(x)
    	+(x-1)G_{n-3}(x)
    	+xG_{n-4}(x)
$$
for $n\ge 4$, with the initial conditions
$$
G_0(x)=1, G_1(x)=x+2, G_2(x)=x^2+6x+5, G_3(x)=x^3+10x^2+25x+12.
$$
Numerical evidence suggests that the RGFs of $D^2$ have only real nonpositive roots.

\begin{conj}
    	The RGFs of $A^2$ and $D^2$ have only real nonpositive roots.
\end{conj}
	
	\section*{Acknowledgements}
	
	This work was partially supported by the National Natural Science Foundation of China
	(Grant No. 12201100).

\end{document}